\newcommand{\Comments}{1}
\definecolor{gray}{gray}{0.5}
\definecolor{darkgreen}{rgb}{0,0.5,0}
\newcommand{\mynote}[2]{\ifnum\Comments=1\textcolor{#1}{#2}\fi}
\newcommand{\E}{\mathbb{E}}
\renewcommand{\O}{\mathcal{O}}
\renewcommand{\o}{\mathit{o}}
\newcommand{\R}{\mathcal{R}}
\newcommand{\T}{\mathcal{T}}
\newcommand{\ones}{\mathbbm{1}}
\newcommand{\cell}{\mathrm{cell}}
\newcommand{\reals}{\mathbb{R}}
\newcommand{\argmin}{\mathop{\mathrm{argmin}}}
\newcommand{\argmax}{\mathop{\mathrm{argmax}}}
\tikzstyle{cell}=[dashed,thick]
\tikzstyle{simplex}=[thick]
\newcommand{\tikzfigscale}{2.9}
\newcommand{\placefiglabel}[1]{\node at (.9,0,1.4) {#1};}
\title{Power Diagram Detection with \\Applications to Information Elicitation}
\author{Steffen Borgwardt\inst{1}\and Rafael M. Frongillo\inst{2}}
\institute{\email{\href{mailto:steffen.borgwardt@ucdenver.edu}{steffen.borgwardt@ucdenver.edu}};
University of Colorado Denver \and
\email{\href{mailto:raf@colorado.edu}{raf@colorado.edu}};
University of Colorado Boulder }
\begin{document}

\maketitle

\begin{abstract}
Power diagrams, a type of weighted Voronoi diagrams, have many applications throughout operations research. We study the problem of power diagram detection: determining whether a given finite partition of $\reals^d$ takes the form of a power diagram. This detection problem is particularly prevalent in the field of information elicitation, where one wishes to design contracts to incentivize self-minded agents to provide honest information.

We devise a simple linear program to decide whether a polyhedral cell decomposition can be described as a power diagram. Further, we discuss applications to property elicitation, peer prediction, and mechanism design, where this question arises. Our model is able to efficiently decide the question for decompositions of $\reals^d$ or of a restricted domain in $\reals^d$. The approach is based on the use of an alternative representation of power diagrams, and invariance of a power diagram under uniform scaling of the parameters in this representation.
\end{abstract}
{\bf{Keywords}:} {power diagram, information elicitation, linear program}
\\\\\noindent
{\bf{MSC}:} {90C05, 90C90, 91B06, 62C05}

\section{Introduction and Background}

Power diagrams play an important role in many disciplines within operations research, ranging from balanced least-squares clustering~\cite{bbg-14, bbg-17,bbd-00} to multiclass classification~\cite{bm-92, b-14,bb-99,cs-01,v-98,ww-98}. They have recently surfaced in the domain of information elicitation, where one wishes to design contracts or mechanisms to incentivize a self-minded agent to reveal their private information truthfully~\cite{lambert2009eliciting,lambert2011elicitation,frongillo2014general,agarwal2015consistent,frongillo2017geometric}.
In the latter, power diagrams are shown in various settings to characterize the possible sets of information that can be treated identically under the mechanism, an important step in understanding which mechanisms can and cannot be \emph{truthful}, i.e.,\ elicit this information effectively.
Motivated by this application in particular, where one wishes to know whether a given mechanism or contract could be truthful, we study the problem of power diagram detection: deciding whether a given cell decomposition is in fact a power diagram. 

We begin with a formal definition of a power diagram~\cite{aurenhammer1987power}. 

\begin{definition}[Power Diagram, Original]
  \label{def:pd}
  Given a set of sites $s_1,\ldots,s_k\in\reals^d$ and weights $w_1,\ldots,w_k\in\reals$, we define the cell corresponding to site $s_i$ as
\begin{equation}
  \label{eq:cell}
  \cell(s_i) = \left\{\, x\in\reals^d : i \in \argmin\nolimits_j \|x-s_j\|^2-w_j^2\, \right\}~.
\end{equation}
A cell decomposition $P=(P_1,\ldots,P_k)$ of $\reals^d$ is a \emph{power diagram} if there exist $\{s_i\}_{i=1}^k \subseteq \reals^d$ and $\{w_i\}_{i=1}^k \subseteq \reals$ so that for all $i\in\{1,\ldots,k\}$ we have $P_i = \cell(s_i)$, as in Eq.~\eqref{eq:cell}.  
\end{definition}
Informally, the cell of a power diagram is the set of all points which are closest to the corresponding site in squared Euclidean distance, discounting each site by its weight squared. For the sake of simplicity, we assume that cell decompositions in $\mathbb{R}^d$ have full-dimensional cells.

Several basic observations follow from this definition. First, note that $\cell(s_i)$ depends on \emph{all} sites $s_1,\ldots,s_k$ and weights $w_1,\ldots,w_k$. Second, setting $w_1 = \cdots = w_k$ recovers the definition of a Voronoi diagram, where points are assigned to a partition according to which site is closest in Euclidean distance. Third, any power diagram is a polyhedral cell complex, in the sense that all of its cells, and the interesections of these cells, are polyhedra. In particular, the shared boundary of any pair of cells is either empty or is contained in a hyperplane, and thus can be described via linear constraints. Power diagrams exhibit many useful properties beyond the basic observations above, see e.g.~\cite{aurenhammer1987criterion,aurenhammer1987power,b-10,b-14,bg-12}.

The problem we consider is the following: given some polyhedral cell complex $P$, where the cells are described by linear inequalities, determine whether or not $P$ is a power diagram. In Section~\ref{sec:lp}, we devise a novel linear program to solve this problem with a running time complexity that is (at most) weakly polynomial. We treat two variants of the problem, where $P$ decomposes $\reals^d$ and where it decomposes a restricted convex domain $D\subseteq \reals^d$. We then present and discuss several applications to information elicitation in Section~\ref{sec:applications}. We conclude in Section~\ref{sec:conclusion} with a brief review of our contribution and additional challenges in practice.

\section{Power Diagram Detection by Linear Programming}\label{sec:lp}

We devise a linear program to check whether a given polyhedral cell complex can be represented as a power diagram. For this, recall that polyhedra can be described as the intersection of finitely many halfspaces. Let us begin with a formal problem statement.

\begin{problem}[Power Diagram Detection]\label{problem}
Let $P=(P_1,\dots,P_k)$ be a polyhedral cell complex defined by a set of separating hyperplanes between the cells. Let the index sets $J_i = \{j : \dim(P_i \cap P_j)=d-1\}$ give the adjacency structure in the complex. Finally, let $a_{ij}\in\reals^d$ and $\gamma_{ij}\in \reals$ with \begin{equation}
    \label{eq:polyhedral-def}
    P_i = \{x\in\reals^d : a_{ij}^T x \leq \gamma_{ij} \; \forall j \in J_i\}.
  \end{equation}
\emph{Decide:} Is there a set of sites $\{s_i\}_{i=1}^k \subset \reals^d$ and $\{w_i\}_{i=1}^k \subset \reals$ such that $P_i = \cell(s_i)$ for all $i\in\{1,\ldots,k\}$?
\end{problem}
Related questions have been studied (for stronger input) in the literature: Based on the information in an incidence lattice of a cell complex, it is possible to determine whether the cell complex is the Voronoi diagram of a set of sites in time linear in the number of facets of the cell complex~\cite{aurenhammer1987recognising}. A similar result, with time linear in the number of vertices of the cell complex, holds in the dual setting of Dirichlet tesselations~\cite{ab-85}. Rybnikov presented an algorithm to decide whether a cell complex given through an incidence graph with information on facets and $(d{-}2)$--faces is a power diagram~\cite{rybnikov1999stresses}. The algorithm is based on finding a feasible point in a set of equalities and strict inequalities.
Its running time is weakly polynomial provided one can address its numerical instability and the challenging implementation of a tailored interior point method.

In this paper, we devise a simple solution to Problem~\ref{problem} that will give a straightforward resolution for applications such as those outlined in Section~\ref{sec:applications}.
The advantages lie in a simple implementation, solvability by any linear programming algorithm, numerical stability, and a practical running time that matches its (favorable) theoretical bound; see Theorems~\ref{thm:detect} and~\ref{thm:detect-restricted}.

Our first tool is the use of an alternative definition of power diagrams, which has been devised in different ways, ranging from pairwise linear separation in a multiclass setting ~\cite{bm-92,b-10,b-14} to duality theory~\cite{bg-12}.

\begin{definition}[Power Diagram, Alternative]\label{def:pd2}
Given a set of sites $s_1,\ldots,s_k\in\reals^d$ and $\gamma_1,\dots,\gamma_k \in \reals$, we define the cell corresponding to site $s_i$ as
\begin{equation}
  \label{eq:gammacells}
  \cell(s_i) = \{\, x\in\reals^d : (s_j-s_i)^Tx\leq \gamma_j-\gamma_i \; \forall j \neq i  \}~.
\end{equation}
A cell decomposition $P=(P_1,\ldots,P_k)$ of $\reals^d$ is a \emph{power diagram} if there exist $\{s_i\}_{i=1}^k \subseteq \reals^d$ and $\gamma_1,\dots,\gamma_k \in \reals$ so that for all $i\in\{1,\ldots,k\}$ we have $P_i = \cell(s_i)$, as in Eq.~\eqref{eq:gammacells}.  
\end{definition}

It is easy to match this definition with the original notion of a power diagram through a small transformation~\cite{b-14}: The cells $P_i$ and $P_j$ are separated by the hyperplane \begin{eqnarray*}
H_{ij}& := &\{x \in \mathbb{R}^d: \|s_i-x\|^2 -w_i = \|s_j-x\|^2-w_j\} \\
& = & \{x \in \mathbb{R}^d: 2(s_j-s_i)^Tx = (s_j^Ts_j-w_j)-(s_i^Ts_i-w_i)\}
\end{eqnarray*}
Choosing $\gamma_i=\frac{1}{2}(s_i^Ts_i-w_i)$ for $i \leq k$ yields this new representation, which fits with the definition of piecewise-linear separability~\cite{bm-92} with the small exception that we use weak instead of strict inequalities.
The right-hand sides in the above take the form $\gamma_j-\gamma_i$. This means that the ability to devise a set of sites $\{s_i\}_{i=1}^k \subseteq \reals^d$ and $\gamma_1,\dots,\gamma_k$ would be a positive resolution to Problem~\ref{problem}. 

Our second tool is the invariance of power diagrams under scaling~\cite{aha-98,b-10}. More precisely, the above representation of power diagrams is invariant under scaling of all $s_i$ and $\gamma_i$, for all $i\leq k$, with the same parameter. For our purposes, it is important to note that in the representation of $H_{ij}$, all parameters can be scaled with a $\lambda_{ij}>0$ without changing $H_{ij}$:
$$2(\lambda_{ij} s_j- \lambda_{ij}s_i)^Tx = \lambda_{ij} \gamma_j-\lambda_{ij}\gamma_i \Leftrightarrow \lambda_{ij}\cdot 2( s_j- s_i)^Tx = \lambda_{ij} \cdot (\gamma_j-\gamma_i) \Leftrightarrow  2(s_j-s_i)^Tx = \gamma_j-\gamma_i.  $$
Of course, the hyperplane specified by $a_{ij}^Tx \leq \gamma_{ij}$ in Eq.~\eqref{eq:polyhedral-def} is similarly invariant under joint scaling of the left-hand and right-hand sides. 

These tools allow us to construct a linear program to check whether the cells specified by the given input can be represented as the cells of a power diagram. Recall Eq.~\eqref{eq:polyhedral-def}, which states the cells of the input as $P_i = \{x\in\reals^d : a_{ij}^T x \leq \gamma_{ij} \; \forall j \in J_i\}$. We have to match these with the description of the cells as specified in Eq.~\eqref{eq:gammacells}, $\cell(s_i)= \{\, x\in\reals^d : (s_j-s_i)^Tx\leq \gamma_j-\gamma_i \; \forall j \neq i  \}$. By the above, we may use any joint scaling of $a_{ij}$ and $\gamma_{ij}$ by a factor $\lambda_{ij}$ in this match.

Consider the following linear program for power diagram detection (PDD)
$$
\begin{array}{lrclcl}
  & \lambda_{ij} \cdot a_{ij}  & = & s_i-s_j &  & \,\,\,\, \forall i \leq k, \forall j \in J_i\\
(PDD) \;\;\;\;& \lambda_{ij}\cdot \gamma_{ij} & = & \gamma_j- \gamma_i & &   \,\,\,\, \forall i \leq k, \forall j \in J_i\\
& \lambda_{ij} & \geq & 1 & &  \,\,\,\, \forall i \leq k, \forall j \in J_i
\end{array}
$$
First, note that the $a_{ij}$ and $\gamma_{ij}$ are given. The $\{s_i\}_{i=1}^k \subseteq \reals^d$, the $\gamma_1,\dots,\gamma_k \in \reals$, and the $\lambda_{ij}$ $\,\,\,\,\forall i \leq k, \forall j \in J_i$ are variables. Further, note that we do not specify an objective function. We are only interested in finding a feasible solution, so a dummy objective function or a Phase-$0$ formulation of this program will be sufficient.

For all hyperplanes in the form $a_{ij}^Tx \leq \gamma_{ij}$ specified by the original input, this linear program checks whether $a_{ij}$ and $\gamma_{ij}$ can be jointly scaled to match Definition~\ref{def:pd2} of a power diagram. These are the first two lines of the program. 

Due to the invariance of power diagrams under joint scaling of all their parameters, the constraints $\lambda_{ij} \geq  1$ $\,\,\,\,\forall i \leq k, \forall j \in J_i$ may be imposed without losing the ability to construct a power diagram (in the cases where this is possible). These additional constraints guarantee that no trivial scaling $\lambda_{ij}=0$ and $s_j=s_i$ is feasible, as well as that no negative scaling $\lambda_{ij}<0$ is feasible, so that the direction of the separation of cells $P_i$ and $P_j$ cannot change.

We now use (PDD) to show the weakly polynomial solvability of Problem~\ref{problem}.

\begin{theorem}
  \label{thm:detect}
  For all rational input, Problem~\ref{problem} can be solved in weakly polynomial time by finding a feasible solution to (PDD). For a yes-instance, a representation of a power diagram is returned.
\end{theorem}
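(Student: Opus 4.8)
The plan is to establish two things: first, that (PDD) is a genuine linear program whose feasibility is decidable in weakly polynomial time for rational input; and second, that feasibility of (PDD) is equivalent to $P$ being a power diagram, with a feasible solution yielding an explicit witness. The running-time claim will follow almost immediately once I confirm the program's structure, so the substance of the proof lies in the equivalence.

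For the complexity claim, I would first observe that (PDD) has variables $s_1,\dots,s_k \in \reals^d$, scalars $\gamma_1,\dots,\gamma_k \in \reals$, and one scalar $\lambda_{ij}$ for each adjacency $j \in J_i$. The total number of variables is thus $kd + k + \sum_i |J_i|$, which is polynomial in the input size, and the number of constraints is likewise polynomial (each adjacency contributes $d+1$ equality constraints plus one inequality). All coefficients $a_{ij}$, $\gamma_{ij}$ are rational by hypothesis, and the constants $1$ on the right of the inequalities are rational. Hence finding a feasible point, or certifying infeasibility, can be done in weakly polynomial time by any polynomial-time LP algorithm (e.g.\ the ellipsoid method or an interior-point method), since deciding feasibility of a rational linear system is a special case of linear programming with a dummy objective. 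I would note explicitly that although the products $\lambda_{ij} a_{ij}$ and $\lambda_{ij}\gamma_{ij}$ look nonlinear, the quantities $a_{ij}$ and $\gamma_{ij}$ are \emph{fixed input data}, so each such product is linear in the single unknown $\lambda_{ij}$; the whole system is genuinely linear.

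The heart of the argument is the equivalence, which I would prove in two directions. For the forward direction, suppose $P$ is a power diagram. By Definition~\ref{def:pd2} there exist sites $s_i$ and scalars $\gamma_i$ so that each $P_i = \cell(s_i)$ as in Eq.~\eqref{eq:gammacells}. For each adjacent pair $i, j$, the facet $P_i \cap P_j$ lies in the separating hyperplane, which the input describes by $a_{ij}^Tx = \gamma_{ij}$ and the power-diagram representation describes by $(s_j - s_i)^Tx = \gamma_j - \gamma_i$. Since both equations define the same hyperplane with consistently oriented inequalities, the vectors $(a_{ij},\gamma_{ij})$ and $(s_i - s_j,\, \gamma_j - \gamma_i)$ are positive scalar multiples of one another; that is, there is some $\mu_{ij} > 0$ with $s_i - s_j = \mu_{ij} a_{ij}$ and $\gamma_j - \gamma_i = \mu_{ij}\gamma_{ij}$. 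Using the scaling invariance recorded above, I may rescale all $s_i$ and all $\gamma_i$ by a common positive factor so that every $\mu_{ij} \geq 1$; setting $\lambda_{ij} = \mu_{ij}$ then gives a feasible point of (PDD). For the converse, any feasible solution of (PDD) supplies sites $s_i$ and scalars $\gamma_i$ such that, for each adjacency, $(s_j - s_i)^Tx \leq \gamma_j - \gamma_i$ is exactly the positive rescaling (by $\lambda_{ij}>0$) of the input inequality $a_{ij}^Tx \leq \gamma_{ij}$, hence describes the identical halfspace; intersecting over $j \in J_i$ reproduces $P_i$ in the form of Eq.~\eqref{eq:gammacells}, so $P$ is a power diagram and the $\{s_i\},\{\gamma_i\}$ form the returned witness.

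The step I expect to be the main obstacle is the careful handling of the converse, specifically verifying that matching the halfspaces only over the adjacent pairs $j \in J_i$ (rather than over all $j \neq i$, as Eq.~\eqref{eq:gammacells} literally demands) still reproduces $P_i$ exactly. One must argue that the non-adjacent constraints in~\eqref{eq:gammacells} are redundant given the facet-defining ones—i.e.\ that for $j \notin J_i$ the constraint $(s_j - s_i)^Tx \leq \gamma_j - \gamma_i$ is implied on $P_i$—so that restricting to $J_i$ loses nothing. I would address this by appealing to the fact that $P$ is a polyhedral cell complex covering the domain: a point fails to lie in $P_i$ only if it violates some facet-defining inequality, since it would then belong to an adjacent cell across the violated facet, and the constraints indexed by $J_i$ already capture every facet of $P_i$. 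The orientation bookkeeping in the forward direction (ensuring the scalar multiple is positive, not negative, which is exactly what the $\lambda_{ij}\geq 1$ constraints enforce) is the other place where care is needed, but the text's discussion of scaling invariance and the role of the $\lambda_{ij}\geq 1$ constraints makes this routine.
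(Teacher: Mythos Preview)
Your proposal is correct and follows the same approach as the paper, though you are considerably more thorough: the paper's own proof of this theorem is essentially just the complexity accounting (variable and constraint count, then invoke a polynomial-time LP solver), with the correctness of (PDD) having been argued informally in the text preceding the theorem rather than inside the proof. The subtlety you flag about matching only over $j \in J_i$ rather than all $j \neq i$ is genuine; the paper addresses it explicitly only in the proof of Theorem~\ref{thm:detect-restricted}, via exactly the covering argument you sketch (both the input complex and the power diagram defined by the feasible $(s,\gamma)$ decompose the domain with disjoint interiors, one is contained cellwise in the other, hence they coincide).
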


\begin{proof}
Recall that linear programs can be solved in weakly polynomial time through some variants of interior point methods, or the Ellipsoid method. As all the parameters in (PDD) are from the original input, it suffices to show that the number of variables and constraints is strongly polynomial in the size of the input. 

Let $p=\sum_{i=1}^k |J_i|$.  There are exactly $3p$ constraints and $k$ variable vectors of type $s_i\in \reals^d$, $k$ variables of type $\gamma_i$, and $p$ variables of type $\lambda_{ij}$, giving a total of $kd + k + p$ variables. 

The variables $\{s_i\}_{i=1}^k \subseteq \reals^d$ and $\gamma_1,\dots,\gamma_k \in \reals$ of a feasible solution represent a power diagram for a yes-instance, as in Definition~\ref{def:pd2}. Further, appropriate values for weights $w_i$ (as in Defintion~\ref{def:pd}) can be derived from the $\gamma_i$ through the equality $\gamma_i=\frac{1}{2}(s_i^Ts_i-w_i)$ for $i \leq k$. \qed
\end{proof}

In the next section, we exhibit a collection of applications in which power diagram detection arises. The practical problems essentially fit with the statement  of Problem~\ref{problem}, with the exception that often the domain is restricted, in the sense that the cells $P_i$ given are actually $P_i\cap D$ for some (full-dimensional) domain $D$. We therefore must address the problem of power diagram detection in restricted domains, where one asks whether the given cells can be expressed as a power diagram \emph{restricted to $D$}. We formally capture this problem in the following.

\begin{problem}[Power Diagram Detection for Restricted Domains]\label{problem-restricted}
  Let $D \subseteq \reals^d$ be a convex domain, and let $P=(P_1,\dots,P_k)$, $P_i\subseteq D$, be a polyhedral cell complex restricted to $D$ defined by a set of separating hyperplanes between the cells.
  Let the index sets $J^D_i = \{j : \dim (P_i \cap P_j) = d-1\}$ give the adjacency structure in the complex.
  Finally, let $a_{ij}\in\reals^d$ and $\gamma_{ij}\in \reals$ with
  \begin{equation}
    \label{eq:polyhedral-def-restricted}
    P_i = \{x\in D : a_{ij}^T x \leq \gamma_{ij} \; \forall j \in J_i^D\}.
  \end{equation}
\emph{Decide:} Is there a set of sites $\{s_i\}_{i=1}^k \subseteq \reals^d$ and $\{w_i\}_{i=1}^k \subseteq \reals$ such that $P_i = \cell(s_i) \cap D$ for all $i\in\{1,\ldots,k\}$?
\end{problem}

This problem variant can be resolved through a small tweak to (PDD): the index sets $J_i$ are replaced with the index sets $J_i^D$. We obtain the following linear program for restricted power diagram detection (r-PDD) for the detection of a power diagram in a restricted domain:
$$
\begin{array}{lrclcl}
  & \lambda_{ij} \cdot a_{ij}  & = & s_i-s_j &  & \,\,\,\, \forall i \leq k, \forall j \in J^D_i\\
(r{-}PDD) \;\;\;\;& \lambda_{ij}\cdot \gamma_{ij} & = & \gamma_j- \gamma_i & &   \,\,\,\, \forall i \leq k, \forall j \in J^D_i\\
& \lambda_{ij} & \geq & 1 & &  \,\,\,\, \forall i \leq k, \forall j \in J^D_i
\end{array}
$$

We now show that Problem~\ref{problem-restricted} is also weakly polynomially solvable. On one hand, we do not find this result surprising, as the structure of the associated linear program essentially remains the same as for Problem~\ref{problem}. However, one has to take special care to understand the impact of the restricted domain, and why the program does not give false-positive answers.

\begin{theorem}
  \label{thm:detect-restricted}
  For all rational input, Problem~\ref{problem-restricted} can be solved in weakly polynomial time by finding a feasible solution to (r-PDD).  For a yes-instance, a representation of a power diagram is returned.
\end{theorem}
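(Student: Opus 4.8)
The plan is to mirror the proof of Theorem~\ref{thm:detect}, reusing its size-and-solvability analysis verbatim, and then to isolate the one genuinely new ingredient: ruling out false positives caused by the restriction to $D$. First I would observe that (r-PDD) differs from (PDD) only in using the index sets $J_i^D$ in place of $J_i$. Since $\sum_{i=1}^k |J_i^D|$ is bounded by the number of facet-adjacencies in the input, the number of variables and constraints remains strongly polynomial in the input size, so any weakly polynomial linear programming method (an interior point variant or the Ellipsoid method) decides feasibility in weakly polynomial time. From a feasible point I would read off the sites $\{s_i\}$ and the scalars $\{\gamma_i\}$, and recover weights via $\gamma_i=\tfrac12(s_i^Ts_i-w_i)$ exactly as before, so that a feasible solution yields a genuine power diagram on all of $\reals^d$.

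Two directions then need to be established. For the easy direction, yes-instance $\Rightarrow$ feasibility, I would take the witnessing power diagram with $\cell(s_i)\cap D=P_i$: for each $j\in J_i^D$ the cells $P_i$ and $P_j$ share a facet in $D$, so the input hyperplane $a_{ij}^Tx=\gamma_{ij}$ and the power-diagram hyperplane coincide, forcing $a_{ij}$ to be parallel to the corresponding difference of sites with matching right-hand side; this produces scalars $\lambda_{ij}>0$ satisfying the two equality blocks of (r-PDD). To meet the normalization $\lambda_{ij}\ge 1$ I would invoke the invariance under uniform scaling: replacing $(s_i,\gamma_i)$ by $(c\,s_i,c\,\gamma_i)$ leaves the diagram unchanged but multiplies every $\lambda_{ij}$ by $c$, so a sufficiently large $c$ makes all $\lambda_{ij}\ge 1$ simultaneously.

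The crux, and the direction demanding the special care alluded to above, is showing that feasibility implies a true yes-instance, i.e.\ that the recovered power diagram satisfies $\cell(s_i)\cap D=P_i$ for every $i$ rather than merely agreeing with the prescribed facets. The key point is that the equalities of (r-PDD) only match the hyperplanes indexed by $J_i^D$, whereas the full cell $\cell(s_i)$ carries a constraint for every $j\neq i$; being a superset of constraints, it can only shrink upon intersection, giving $\cell(s_i)\cap D\subseteq P_i$ for all $i$. The danger is precisely that a constraint from a pair $j\notin J_i^D$, corresponding to cells adjacent only outside $D$, slices through the interior of $P_i$ and makes this containment strict. To exclude this I would use that both $\{P_i\}$ and $\{\cell(s_i)\cap D\}$ are partitions of $D$ into full-dimensional polyhedra with pairwise disjoint interiors: since the power-diagram cells cover $\reals^d$, their restrictions cover $D$, whence $\bigcup_i(\cell(s_i)\cap D)=D=\bigcup_i P_i$. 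Combining this cover with $\cell(s_i)\cap D\subseteq P_i$, a short interior argument closes the gap---for $x$ in the interior of $P_i$ a whole neighborhood lies in $\mathrm{int}(P_i)$, each piece $\cell(s_j)\cap D\subseteq P_j$ with $j\neq i$ meets that neighborhood only inside the lower-dimensional set $\partial P_j$, so $\cell(s_i)\cap D$ is dense in it and, being closed, contains it. Hence $\mathrm{int}(P_i)\subseteq \cell(s_i)\cap D$, and taking closures yields $P_i=\cell(s_i)\cap D$. I expect this covering-and-interior step to be the main obstacle, since it is exactly what prevents the extra out-of-domain separating hyperplanes from producing a false positive.
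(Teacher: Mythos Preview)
Your proposal is correct and follows essentially the same route as the paper: reduce the complexity claim to Theorem~\ref{thm:detect}, then for correctness show $\cell(s_i)\cap D\subseteq P_i$ from the constraint inclusion and close the gap by observing that both $\{P_i\}$ and $\{\cell(s_i)\cap D\}$ decompose $D$ with pairwise disjoint interiors. Your interior/density argument is simply a more explicit version of the paper's one-line ``both families decompose $D$ and one is contained cell-wise in the other, hence equal'' step, and your explicit invocation of uniform scaling to secure $\lambda_{ij}\ge 1$ fills in a detail the paper leaves implicit.
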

\begin{proof}
The running time claim, and the return of a representation of a power diagram for yes-instances, follows analgously to the proof of Theorem~\ref{thm:detect}. The single, yet important difference lies in the restricted domain $D$. We have to consider how the index set $J^D_i$ in Problem~\ref{problem-restricted} relates to the cells $P_i$ in the restricted problem. More precisely, we have to make sure that a yes-answer to the problem corresponds precisely to those inputs, where the polyhedral cells in the domain $D$ come from the intersection of a power diagram in $\mathbb{R}^d$ with the domain $D$.

To this end, let $P'$ be a cell complex in $\mathbb{R}^d$ as defined in Problem~\ref{problem}, and let $J_i$ be the corresponding index sets. Further, let $J^D_i$ be the index sets as defined in Problem~\ref{problem-restricted}. Note $J^D_i\subseteq J_i$. Thus any feasible solution $(s,\gamma)$ to (PDD) (for index sets $J_i$) is also a feasible solution to (r-PDD) (for index sets $J^D_i$). Recall that, by Definition~\ref{def:pd2}, a feasible solution  $(s,\gamma)$ for (PDD) provides the parameters to represent a power diagram $P'$   in $\mathbb{R}^d$. This implies that if there exists a power diagram $P'$ in $\mathbb{R}^d$ with $P=P'\cap D$, the corresponding $(s,\gamma)$ is a feasible solution to both (PDD) and (r-PDD) and we correctly identify a yes-instance for Problem~\ref{problem-restricted}.

It remains that show that all no-instances are identified correctly, as well. Assume there is a feasible solution $(s,\gamma)$ to (r-PDD), and let $P'$ be the corresponding power diagram in $\mathbb{R}^d$. Let index sets $J_i$ and $J^D_i$ represent the adjacency of cells of $P'$ in $\mathbb{R}^d$, respectively in $D$, as defined in Problems~\ref{problem} and~\ref{problem-restricted}. We have to show that $P=P'\cap D$.

By construction, $P_i = \{x\in D : (s_j-s_i)^T x \leq \gamma_j-\gamma_i \; \forall j \in J_i^D\}$ and $P_i'\cap D=\{x\in D : (s_j-s_i)^T x \leq \gamma_j-\gamma_i \; \forall j \in J_i\}$ for all $i=1,\dots,k$. This gives $(P_i'\cap D)\subseteq P_i$ for all $i=1,\dots,k$. 

Now, recall that the cells $P_i$ decompose the convex domain $D$, i.e. $\bigcup_{i=1}^k P_i =D$ with $\text{int}(P_i)\cap\text{int}(P_j)=\emptyset$ for $i\neq j$. But as $P'$ is a power diagram of $\mathbb{R}^d$,  the $(P_i'\cap D)$ also decompose $D$, i.e. we have $\bigcup_{i=1}^k (P_i'\cap D) =D$ and $\text{int}(P'_i)\cap\text{int}(P'_j)=\emptyset$  for $i\neq j$. Together with $(P_i'\cap D)\subseteq P_i$, we obtain $P_i=(P_i'\cap D)$ for all $i=1,\dots,k$. This proves the claim.
\qed
\end{proof}

\section{Applications to Information Elicitation}\label{sec:applications}

In the domain of information elicitation, one wishes to design contracts or mechanisms to incentivize a self-minded agent to reveal their private information truthfully.
Often this private information comes in one of two varieties: a \emph{belief} about some future event, or a \emph{utility} or valuation about a particular commodity or outcome.
Power diagrams play a central role in the design of such mechanisms, as characterization theorems show that contracts or mechanisms are truthful (have the correct incentives) if and only if they partition the information space (or ``type space'') into cells that form a power diagram.
As the information space is typically not all of $\reals^d$, we will work with the restricted version, Problem~\ref{problem-restricted}.
In what follows, we show how to apply Theorem~\ref{thm:detect-restricted} to three such information elicitation scenarios where power diagrams arise.

\subsubsection*{Property Elicitation.}

One of the most basic information elicitation tasks is to design a contract to elicit some function, or \emph{property}, of an agent's belief.
The literature dates back to Savage~\cite{savage1971elicitation} and Osband~\cite{osband1985providing}, with its modern incarnation beginning with Lambert et al.~\cite{lambert2008eliciting,lambert2009eliciting}.
Concretely, consider a finite set of outcomes $\O$, the set of probability distributions $\Delta(\O)$, a finite set of possible \emph{reports} $\R$, and a property $\Gamma:\Delta(\O)\to 2^\R$ which designates a set of reports considered ``correct'' or ``desired'' for an agent with a particular belief.
For example, the map $\Gamma(p) = \argmax_{\o\in\O} p(\o)$ is the mode functional, which one notes is set-valued whenever there are multiple outcomes with the highest probability.

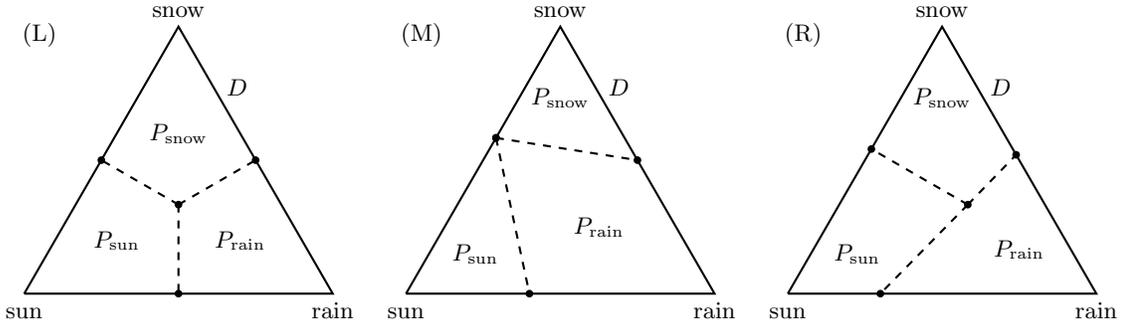
\begin{figure}[t]
  \begin{tikzpicture} [scale=\tikzfigscale, thick, tdplot_main_coords]
    \coordinate (orig) at (0,0,0);

    \pgfmathsetmacro{\ph}{4/12}
    \pgfmathsetmacro{\pm}{4/12}
    \pgfmathsetmacro{\pl}{4/12}
    \coordinate (p) at (\ph,\pm,\pl);
    \coordinate[label=below:sun\vphantom{i}] (h) at (1,0,0);
    \coordinate[label=below:rain] (m) at (0,1,0);
    \coordinate[label=above:snow\vphantom{i}] (l) at (0,0,1);

    \coordinate (hm) at (\ph+\pl/2,\pm+\pl/2,0);
    \coordinate (hl) at (\ph+\pm/2,0,\pl+\pm/2);
    \coordinate (ml) at (0,\pm+\ph/2,\pl+\ph/2);

    \draw[simplex] (h) -- (m) -- (l) -- (h);
    \draw[cell] (p) -- (hm);
    \draw[cell] (p) -- (hl);
    \draw[cell] (p) -- (ml);
    \fill (hm) circle [radius=0.5pt];
    \fill (hl) circle [radius=0.5pt];
    \fill (ml) circle [radius=0.5pt];
    \fill (p) circle [radius=0.5pt];

    \node at ($0.38*(m)+0.85*(l)$) {$D$};

    \node at ($0.4*(h)$) {$P_\text{sun}$};
    \node at ($0.4*(m)$) {$P_\text{rain}$};
    \node at ($0.4*(l)$) {$P_\text{snow}$};

    \placefiglabel{(L)}
  \end{tikzpicture}
  ~
  \begin{tikzpicture} [scale=\tikzfigscale, thick, tdplot_main_coords]
    \coordinate (orig) at (0,0,0);

    \pgfmathsetmacro{\ph}{5/12}
    \pgfmathsetmacro{\pl}{7/12}
    \pgfmathsetmacro{\pm}{0}
    \coordinate (p) at (\ph,\pm,\pl);
    \coordinate[label=below:sun\vphantom{i}] (h) at (1,0,0);
    \coordinate[label=below:rain] (m) at (0,1,0);
    \coordinate[label=above:snow\vphantom{i}] (l) at (0,0,1);

    \coordinate (hm) at (0.6,0.4,0);
    \coordinate (hl) at (\ph+\pm/2,0,\pl+\pm/2);
    \coordinate (ml) at (0,0.5,0.5);

    \draw[simplex] (h) -- (m) -- (l) -- (h);
    \draw[cell] (p) -- (hm);
    \draw[cell] (p) -- (hl);
    \draw[cell] (p) -- (ml);
    \fill (hm) circle [radius=0.5pt];
    \fill (hl) circle [radius=0.5pt];
    \fill (ml) circle [radius=0.5pt];
    \fill (p) circle [radius=0.5pt];

    \node at ($0.38*(m)+0.85*(l)$) {$D$};

    \node at ($0.55*(h)$) {$P_\text{sun}$};
    \node at ($0.25*(m)$) {$P_\text{rain}$};
    \node at ($0.6*(l)$) {$P_\text{snow}$};

    \placefiglabel{(M)}
  \end{tikzpicture}
  ~
  \begin{tikzpicture} [scale=\tikzfigscale, thick, tdplot_main_coords]
    \coordinate (orig) at (0,0,0);

    \pgfmathsetmacro{\ph}{3/12}
    \pgfmathsetmacro{\pl}{4/12}
    \pgfmathsetmacro{\pm}{5/12}
    \coordinate (p) at (\ph,\pm,\pl);
    \coordinate[label=below:sun\vphantom{i}] (h) at (1,0,0);
    \coordinate[label=below:rain] (m) at (0,1,0);
    \coordinate[label=above:snow\vphantom{i}] (l) at (0,0,1);

    \coordinate (hm) at (0.7,0.3,0);
    \coordinate (hl) at (\ph+\pm/2,0,\pl+\pm/2);
    \coordinate (ml) at (0,0.48,0.52);

    \draw[simplex] (h) -- (m) -- (l) -- (h);
    \draw[cell] (p) -- (hm);
    \draw[cell] (p) -- (hl);
    \draw[cell] (p) -- (ml);
    \fill (hm) circle [radius=0.5pt];
    \fill (hl) circle [radius=0.5pt];
    \fill (ml) circle [radius=0.5pt];
    \fill (p) circle [radius=0.5pt];

    \node at ($0.38*(m)+0.85*(l)$) {$D$};

    \node at ($0.55*(h)$) {$P_\text{sun}$};
    \node at ($0.5*(m)$) {$P_\text{rain}$};
    \node at ($0.6*(l)$) {$P_\text{snow}$};

    \placefiglabel{(R)}
  \end{tikzpicture}
  \caption{\textbf{(L)} The cell decomposition corresponding to the mode functional, which is a power diagram (indeed, a Voronoi diagram) intersected with $D$, the probability simplex over outcomes $\{$sun, rain, snow$\}$ projected onto $\reals^2$.  (Take e.g. sites at the corners of the simplex, with equal weights.)
    \textbf{(M)} Another cell decomposition which is a power diagram intersected with $D$, and therefore corresponds to an elicitable property.  Note that the index set would be $J_{\text{sun}} = \{$rain, snow$\}$ in Problem~\ref{problem}, whereas $J_{\text{sun}}^D = \{$rain$\}$ in Problem~\ref{problem-restricted}.
    \textbf{(R)} A cell decomposition which is not a power diagram intersected with $D$, and therefore not an elicitable property.}
  \label{fig:example-simplex}
\end{figure}

To elicit the property $\Gamma$ from an agent, we wish to design a contract $S:\R\times\O\to\reals$ which determines the payment to the agent once the outcome materializes.
The protocol is thus: the score $S$ is announced, the agent reports some $r\in\R$, the outcome $\o\in\O$ is revealed, and the agent is paid $S(r,\o)$.
We say that the score $S$ \emph{elicits} the property $\Gamma$ if for all beliefs $p\in\Delta(\O)$, we have
\begin{equation}
  \label{eq:prop-elic}
  \Gamma(p) = \argmax_{r\in\R} \E_{\o\sim p}[S(r,\o)]~,
\end{equation}
that is, the agent maximizes their expected score according to their belief $p$ by reporting $r\in\Gamma(p)$.
For example, if one would like to know which of sun, rain, or snow, is most likely for the weather tomorrow, one would set $\R=\O=\{$sun, rain, snow$\}$ and offer to pay the agent $S(r,\o) = \ones\{r=\o\}$, for the simple reason that this score elicits the mode: $\argmax_r \E_{\o\sim p}[\ones\{r=\o\}] = \argmax_r p(r)$.

A natural question is thus the following: for which properties $\Gamma$ can one devise a score which elicits it?
Perhaps surprisingly, the answer is simple: the elicitable properties $\Gamma$ are precisely those for which the partition $P=(P_r)_{r\in\R}$ given by $P_r = \{p : r \in \Gamma(p)\}$ forms a power diagram~\cite{lambert2009eliciting,frongillo2014general}.
More precisely, after projecting to $\reals^{|\O|-1}$, e.g. by dropping the last coordinate, the partition forms a power diagram intersected with the (projected) probability simplex.
See Figure~\ref{fig:example-simplex}(L) for an illustration of this projection for the mode.
The rough intuition for this result is as follows: Eq.~\eqref{eq:cell} can be expressed via an $\argmin$ over $k$ affine functions of $x$ by dropping the irrelevant $\|x\|^2$ term, and then as the negation of an $\argmax$ of affine functions.
As the expected score is linear (and therefore affine) in the agent's belief $p$, the distributions $p$ which share the same optimal report (i.e.,\ the same $\argmax$) must form a cell of a power diagram.

With this characterization at hand, as a practical matter, it becomes relevant to test whether a given polyhedral cell complex, dividing the probability simplex into regions with the same report, forms a power diagram.
This test can be done via Theorem~\ref{thm:detect-restricted}, where the restricted domain $D$ is the probability simplex, $D = \{x\in\reals^d : \sum_{i=1}^d x_i = 1, x_i \geq 0 \;\;\; \forall i\leq d \}$.
Note that the restricted domain can indeed change the index sets from (PDD) to (r-PDD), and even when $P_i \cap P_j \neq \emptyset$, one could still have $j \notin J^D_i$, as we illustrate in Figure~\ref{fig:example-simplex}(M).

Essentially the same formalism as described above arises in machine learning as well, in the context of designing loss functions for classification or ranking tasks~\cite{bartlett2006convexity,ramaswamy2013convex,agarwal2015consistent}.
Here one may wish to assign certain distributions over the labels (or classes) to different reports, and one may ask, for which such assignments does there exist a loss function which, when minimized, would yield these reports.
This is again the same question as whether the property corresponding to this assignment is elicitable, where we simply negate the score to obtain a loss.

\subsubsection*{Peer Prediction.}

A problem closely related to property elicitation is peer prediction, where one has access to several agents rather than one, but no direct access to the ground truth.
Instead of designing a contract which scores an agent's report based on some observed outcome, one will never see the outcome, and therefore must score the agents based on each other's reports~\cite{miller2005eliciting,witkowski2012robust}.

To make headway, one typically assumes some structure about the underlying Bayesian (or pseudo-Bayesian) process through which agents form their beliefs.
In particular, one typically assumes that agents receive some \emph{signal} $S\in\O$ about the true outcome (like the quality of a hotel, or the correct label for an image classification task), and from this signal form a posterior belief $p(\cdot|S)$ about the true outcome, or about what another agent's signal was.
By making assumptions about the possible values of this posterior, one can design mechanisms which have a truthful equilibrium, in which each agent simply reports their signal $S$.

Constraints on the possible posteriors in the literature often take the form $p(\cdot|S=\o)\in P_\o$ for some sets $\{P_\o : \o\in\O\}$, dubbed a \emph{belief model constraint} in Frongillo and Witkowski~\cite{frongillo2017geometric}.
Their work shows that there exists a mechanism with a truthful equilibrium if and only if the sets $\{P_\o\}$ form a power diagram~\cite[Corollary 3.5]{frongillo2017geometric}.
(More precisely, there must exist a power diagram with sites $s_\o$ such that $P_\o\subseteq\cell(s_\o)$ for all $\o\in\O$, but we restrict attention to \emph{maximal} constraints, where every distribution is allowed to be the posterior following some signal.)
The problem of detecting whether a belief model constraint supports a mechanism, and constructing said mechanism if so, thus reduces to detecting and constructing a power diagram in a restricted domain (again the probability simplex), to which Theorem~\ref{thm:detect-restricted} applies.

\begin{figure}
  \centering
  \begin{tikzpicture}[thick,scale=0.8]
    \path[->] (0,0) edge node[left]{$t(\o_2)$} (0,4) edge node[below]{$t(\o_1)$} (4,0);
    \draw[dashed] (2,0) -- (2,4) (2,2) -- (4,2);
    \node[anchor=center] at (1,2) {$p_1$};
    \node[anchor=center] at (3,1) {$p_3$};
    \node[anchor=center] at (3,3) {$p_2$};
  \end{tikzpicture}
  \qquad
  \begin{tikzpicture}[thick,scale=0.8]
    \path[->] (0,0) edge node[left]{$t(\o_2)$} (0,4) edge node[below]{$t(\o_1)$} (4,0);
    \draw[dashed] (2,0) -- (2,4) (0,2) -- (4,2);
    \node[anchor=center] at (1,1) {$p_1$};
    \node[anchor=center] at (1,3) {$p_0$};
    \node[anchor=center] at (3,1) {$p_3$};
    \node[anchor=center] at (3,3) {$p_2$};
  \end{tikzpicture}
  \caption{An allocation rule ``skeleton'' which cannot be completed to an implementable allocation rule for any distinct choices of $p_i$ (left), and one which could be so completed for appropriate choices of $p_i$ (right).  Here $t$ denotes the type of an agent, and $t$ being the cell corresponding to $p_i$ implies the allocation $f(t) = p_i$.}
  \label{fig:example-mechanism}
\end{figure}
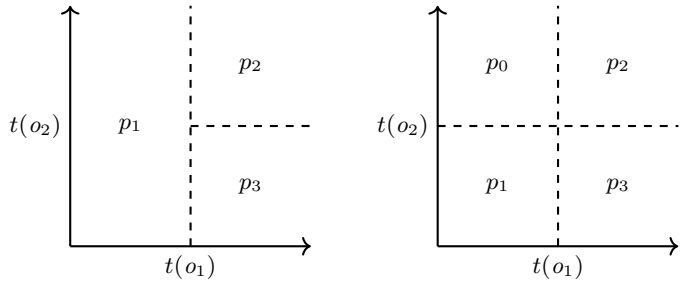

\subsubsection*{Mechanism Design.}

In mechanism design, one wishes to design an algorithm to choose an outcome based on the reports of the participants, but in a way which is robust to strategic misreports.
From the well-known \emph{revelation principle}, we may assume without loss of generality that the reports or ``bids'' submitted to a mechanism take the form of utility functions: each participant reports their utility for each possible outcome.
From the reported utilities (called \emph{types}) of the agents, the mechanism then chooses a distribution over outcomes (called the \emph{allocation}) as well as a payment owed the mechanism by each participant.
For simplicity, we will consider the case of a single agent.

Given the agent's type $t\in\reals^\O$ which encodes their utility $t(\o)$ following each outcome $\o$, the mechanism wishes to choose a random allocation from some set $\O$ of outcomes, as well as the amount the agent should pay.
Formally, given a finite outcome space $\O$ and a convex \emph{type space} $\T \subseteq \reals^\O$, a (direct, randomized) \emph{mechanism} is a pair $(f,p)$ where $f:\T\to\Delta_\O$ is the \emph{allocation rule} and $p:\T\to\reals$ is the \emph{payment} function.
We typically assume that the agent's utility is \emph{quasi-linear} in the sense that their net utility upon allocation $\o$ and payment $c$ is the difference $t(\o) - c$.
Note that the expected utility can be written $U(t',t) = f(t')\cdot t - p(t')$, where the inner product is between elements of $\reals^\O$, where we consider $f(t') \in \Delta_\O \subset \reals^\O$ to be the vector form of the allocation distribution.
We say the mechanism $(f,p)$ is \emph{truthful} if $U(t',t) \leq U(t,t)$ for all $t,t' \in \T$.

A fundamental question in mechanism design is \emph{implementability}: given a desired allocation rule $f$, can one find a payment function $p$ such that the pair $(f,p)$ is a truthful mechanism?
In other words, can one design payments to implement the desired allocation rule while being robust to the incentives of the agents?
(If given the allocations themselves, one can use the fact that weak monotonicity (WMON) is sufficient and perform checks in $O(n^2)$ time~\cite{saks2005weak}.)
We consider the following variant of this classic question: given the ``skeleton'' of an allocation rule, that decides upon ``cells'' of types to assign the same allocation but not what that allocation should be, could there be any assignment of allocations to cells such that the resulting allocation rule is implementable?
See Figure~\ref{fig:example-mechanism} for an illustration.
More formally, given the skeleton $g:\T\to\{1,\ldots,m\}$, could there be any implementable allocation rule of the form $f = h \circ g$ for some $h:\{1,\ldots,m\}\to\Delta(\O)$?
From~\cite{saks2005weak,frongillo2014general}, such skeletons must form a power diagram, and thus, given $g$ in the form~\eqref{eq:polyhedral-def-restricted} where $D=\T$ is the type space, this problem can also be solved as stated in Theorem~\ref{thm:detect-restricted}.

\section{Conclusion}
\label{sec:conclusion}

We have presented a simple linear program for power diagram detection, the problem of deciding whether a polyhedral cell decomposition given by linear equalities can be represented as a power diagram. The problem of power diagram detection arises in information elicitation applications, and we have detailed three such applications, to property elicitation, peer prediction, and mechanism design. In each of these problems, the domain is typically restricted to a subset $D$ of $\reals^d$, and thus we have also addressed the restricted power diagram detection problem (Problem~\ref{problem-restricted} and Theorem~\ref{thm:detect-restricted}), to determine whether the given complex can be represented as a power diagram restricted to $D$. Combined, our results give a (weakly) polynomial time algorithm to determine whether a given property is elicitable, whether a given belief model constraint corresponds to a truthful peer prediction mechanism, or whether a given allocation ``skeleton'' could extended to an implementable allocation rule.

As a practical concern, we note that information elicitation problems do not necessarily exhibit polyhedral representations of their cells. More precisely, it is possible that the cells of a proposed property, belief model constraint, or allocation rule, are not convex polyhedra, and thus could not possibly be represented as those of a power diagram. Even when they allow a polyhedral representation, the input may be given in a form such that this is not obvious. For example, given a representation of a polyhedron as the convex hull of its vertices, it typically is not efficient to devise a representation as the intersection of halfspaces. In many applications, expert knowledge will allow a viable resolution of these issues, before the methods presented in this paper will be applied. In general however, they pose significant challenges before obtaining a broader detection algorithm in practice.


\bibliographystyle{plain}
\bibliography{diss1}

\end{document}